\newtheorem{theorem}{Theorem}[section]
\newtheorem{lemma}[theorem]{Lemma}
\newtheorem{res}[theorem]{Main result}
\newtheorem{corollary}[theorem]{Corollary}
\newtheorem{property}[theorem]{Property}
\newtheorem{fact}[theorem]{Fact}
\theoremstyle{definition}
\theoremstyle{remark}
\newcommand{\E} {\ensuremath {\mathbb{E}}}
\newcommand{\N} {\ensuremath {\mathbb{N}}}
\newcommand{\R} {\ensuremath {\mathbb{R}}}
\newcommand{\I} {\ensuremath {\mathbb{I}}}
\newcommand{\F} {\ensuremath {\mathscr{F}}}
\newcommand{\Nn} {\ensuremath {\mathscr{Nn}}}
\newcommand{\A} {\ensuremath {\mathscr{A}}}
\newcommand{\X} {\ensuremath {\mathscr{X}}}
\newcommand{\B} {\ensuremath {\mathscr{B}}}
\newcommand{\M} {\ensuremath {\mathscr{M}}}
\newcommand{\D} {\ensuremath {\mathscr{D}}}
\newcommand{\Ci} {\ensuremath {\mathscr{C}}}
\newcommand{\mo} {\ensuremath {\mathscr{P}}}
\newcommand{\No} {\ensuremath {\mathcal{N}}}
\newcommand{\Wh} {\ensuremath {\mathscr{W}}}
\title[Asymptotic equivalence for density estimation experiments]{Asymptotic equivalence for density estimation and Gaussian white noise: an extension}
\author{Ester ~Mariucci}
\address{\it Laboratoire LJK, Universit\'e Joseph Fourier UMR 5224\\
    \it 51, Rue des Math\'ematiques, Campus de Saint Martin d'H\`eres\\
        \it BP 53 38041 Grenoble Cedex 09}
\email{Ester.Mariucci@imag.fr}
\date{\today}
\begin{document}
\begin{abstract}
 The aim of this paper is to present an extension of the well-known asymptotic equivalence between density estimation experiments and a Gaussian white noise model. Our extension consists in enlarging the nonparametric class of the admissible densities. More precisely, we propose a way to allow densities defined on any subinterval of $\R$, and also some discontinuous or unbounded densities are considered (so long as the discontinuity and unboundedness patterns are somehow known a priori). The concept of equivalence that we shall adopt is in the sense of the Le Cam distance between statistical models. The results are constructive: all the asymptotic equivalences are established by constructing explicit Markov kernels.
\end{abstract}

\maketitle
\section{Introduction}
 When looking for asymptotic results for some statistical model it is often useful to profit from a global asymptotic equivalence, in the Le Cam sense, in order to be allowed to work in a simpler but equivalent model. Indeed, proving an asymptotic equivalence result means that one can transfer asymptotic risk bounds for any inference problem from one model to the other, at least for bounded loss functions. Roughly speaking, saying that two models, $\mo_1$ and $\mo_2$, are equivalent means that they contain the same amount of information about the parameter that we are interested in. For the basic concepts and a detailed description of the notion of asymptotic equivalence, we refer to \cite{lecam, LC2000}. A short review of this topic will be given in Appendix. 
 
 In recent years, numerous papers have been published on the subject of nonparametric asymptotic equivalence. For a non exhaustive list of the main ones among them, see, for example, the introduction in \cite{esterlevy}. In this paper, we will focus on nonparametric density estimation experiments.
 

The seminal paper in this subject is due to Nussbaum \cite{N96}. There, the asymptotic equivalence between an experiment given by $n$ observations of a density $f$ on $[0,1]$ and a Gaussian white noise model:
$$dy_t=\sqrt {f(t)}dt+\frac{1}{2\sqrt n}dW_t,\quad t\in[0,1],$$
was established. Over the years several generalizations of this result have been proposed such as \cite{BC04,j03,cmultinomial}.
In \cite{BC04}, Brown et al. obtained the global asymptotic equivalence between a Poisson process with variable intensity and a Gaussian white noise experiment with drift problem. Via Poissonization, this result was also extended to density estimation models. In \cite{j03} Jähnisch and Nussbaum proved the global asymptotic equivalence between a nonparametric model associated with the observation of independent but not identically distributed random variables on the unit interval and a bivariate Gaussian white noise model.
More closely related to our work is the result of Carter in \cite{cmultinomial}. In that paper, he proposed a new approach to establish the same normal approximations to density estimations experiments as in \cite{N96}. While the result in \cite{N96} is obtained by means of Poissonization, in \cite{cmultinomial} the key step is to connect the density estimation problem to a multinomial experiment and to simplify the latter with a multivariate normal experiment. 

The purpose of the present work is to generalize \cite{N96} and \cite{cmultinomial}.
More precisely, the density estimation experiments that we consider consist of $n$ independent observations $(Y_i)_{i=1}^n$ defined on a interval $I\subseteq \R$ from some unknown distribution $P_f^g$ having density (with respect to the Lebesgue measure on $I$) $\frac{dP_f^g}{dx}(x)=f(x)g(x).$ 
In particular, we do not require $I \subseteq \mathbb R$ to be bounded as is generally done in the existing literature.
The function $g$ is supposed to be known whereas $f$ is unknown and belongs to a certain nonparametric functional class $\F$. Formally, the statistical model we consider is 
\begin{equation}\label{eq:f}
 \mo_n^g=\big(\R^n,\B(\R^n),\{P_f^g:f\in\F\}\big).
 \end{equation}
The exact assumptions on $f$ and $g$ will be specified in Section \ref{sec:parameter}. Here, let us only stress the fact that $f$ has to be bounded away from zero and infinity and sufficiently regular, whereas $g$ can be both unbounded and discontinuous. The advantage with respect to the earlier works is that this framework allows us to treat densities of the form $h=fg$ not necessarily bounded nor smooth. See Section \ref{sec:discussion} for a discussion about the hypotheses. 
 
Finally, let us introduce the Gaussian white noise model. For that, let us denote by $(C,\Ci)$ the space of continuous mappings from $I$ into $\R$ endowed with its standard filtration and by $\mathbb W_f^g$ the law induced on $(C,\Ci)$ by a stochastic process satisfying:
\begin{equation}\label{eq:Y}
 dY_t=\sqrt{f(t)g(t)}dt+\frac{dW_t}{2\sqrt{n}},\quad t\in I,
\end{equation}
where $(W_t)_{t\in\R}$ is a Brownian motion on $\R$ conditional on $W_0=0$.
Then we set
\begin{equation}\label{eq:wh}
 \Wh_n^g=\big(C,\Ci,\{\mathbb W_f^g:f\in\F\}\big).
\end{equation}
Let $\Delta$ be the Le Cam pseudo-distance between statistical models having the same parameter space. For the convenience of the reader a formal definition is given in Section \ref{sec:lecam}. Our main result is then as follows (see Theorem \ref{teo1} for the precise statement):
\begin{res}
Let $I$ be a possibly infinite subinterval of $\mathbb{R}$ and let $\F$ consist of functions bounded away from $0$ and $\infty$, satisfying the regularity assumptions stated in Section \ref{sec:parameter}. Then, we have
 \begin{equation}\label{teointro}
\lim_{n\to\infty}\Delta(\mo_n^g,\Wh_n^g)=0.  
 \end{equation}
\end{res}
In some special cases an explicit upper bound for the rate of convergence in \eqref{teointro} is available; see, e.g. Corollary \ref{cor1}.
The structure of the proof follows Carter's in \cite{cmultinomial}, but we detach from it on several aspects. The basic idea is to use his multinomial-multivariate normal approximation, but some technical points have to be taken into account. One of these is that $I$ may be infinite, so that, in particular, the subintervals $J_i$ in which it is partitioned cannot be of equal length. We choose intervals $J_i$ of varying length, according to the quantiles of $\nu_0$, the measure having density $g$ with respect to Lebesgue. This kind of partitions was already considered in \cite{esterlevy}.

The paper is organized as follows. Section \ref{sec:parameter} fixes the assumptions on the parameter space $\F$. Section \ref{sec:mainresult} contains the statement of the main results and a discussion while Section \ref{sec:proofs} is devoted to the proofs. The paper includes an Appendix recalling the definition and some useful properties of the Le Cam distance. 
\section{The parameter space}\label{sec:parameter}
Fix a finite measure $\nu_0$ on a possibly infinite interval $I\subset \R$, admitting a density $g$ with respect to Lebesgue. The class of functions $\F$ will be considered as a class of probability densities with respect to $\nu_0$, i.e. $\int_I f(x)g(x)dx=1$. For each $f\in\F$, let $\nu$ (resp. $\hat \nu_m$) be the measure having $f$ (resp. $\hat f_m$) as a density with respect to $\nu_0$ where, for every $f\in\F$, $\hat f_m(x)$ is defined as follows. Given a positive integer $m$, let $J_1= I\cap (-\infty, v_1]$, $J_j:=(v_{j},v_{j+1}]$ for $j=1,\dots,m-1$ and $J_m= I\cap (v_m,\infty)$ where the $v_j$'s are the quantiles for $\nu_0$, i.e.
\begin{equation}\label{eq:Jj}
\mu_n:=\nu_0(J_j)=\frac{\nu_0(I)}{m},\quad \forall j=1,\dots,m.
\end{equation}
Define  $x_j^*:=\frac{\int_{J_j}x\nu_0(dx)}{\mu_n}$ and
\begin{equation}\label{eq:hatf}
\hat f_m(x):=
 \begin{cases}
\frac{\nu(J_1)}{\mu_n} & \textnormal{if } x\in  I\cap (-\infty, x_1^*],\\
\frac{1}{x_{j+1}^*-x_j^*}\bigg[\frac{\nu(J_{j+1})}{\mu_n}(x-x_j^*)+\frac{\nu(J_{j})}{\mu_n}(x_{j+1}^*-x)\bigg] & \textnormal{if } x\in (x_j^*,x_{j+1}^*] \quad j = 1,\dots,m-1,\\
\frac{\nu(J_m)}{\mu_n} & \textnormal{if } x\in I\cap (x_m^*,\infty).
 \end{cases}
\end{equation}

We now explain the assumptions we will need to make on the parameter $f$. 
We require that:
\begin{enumerate}[(H1)]
 \item  There exist constants $\kappa, M >0$ such that $\kappa \leq f(y)\leq M$, for all $y\in I$ and $f\in \F$.
\end{enumerate}

The $m$ introduced above will be considered as a function of $n$, $m = m_n$. We can thus consider $\widehat{\sqrt {f}}_m$, the linear interpolation of $\sqrt{f}$ constructed as $\hat f_m$ above and introduce the quantities:
  \begin{align*}
   H_m^2(f)&:= \int_I\Big(\sqrt{f(x)}-\sqrt{\hat f_m(x)}\Big)^2 \nu_0(dx),\\
   A_m^2(f)&:= \int_I\Big(\widehat{\sqrt {f}}_m(y)-\sqrt{f(y)}\Big)^2\nu_0(dy),\\
   B_m^2(f)&:= \sum_{j=1}^m\bigg(\int_{J_j}\frac{\sqrt{f(y)}}{\sqrt{\nu_0(J_j)}}\nu_0(dy)-\sqrt{\nu(J_j)}\bigg)^2.
  \end{align*}
We will assume the existence of a sequence of discretizations $m = m_n$ such that:
\begin{enumerate}[(C1)]
 \item \label{cond:ABC}$\lim\limits_{n \to \infty}n\sup\limits_{f \in\F} \big(H_m^2(f)+A_m^2(f)+B_m^2(f)\big)=0$. 
\end{enumerate}  

\section{Main results and discussion}\label{sec:mainresult}
Using the notation introduced in Section \ref{sec:parameter}, we now state our main result in terms of the models $\mo_n^g$ and $\Wh_n^g$ defined in \eqref{eq:f} and \eqref{eq:wh}, respectively. 
\begin{theorem}\label{teo1}
Let $\nu_0$ be a finite measure on an (possibly infinite) interval $I\subset \R$ having density $g$ with respect to Lebesgue. Suppose that there exists a sequence $m = m_n$ such that
 every $f \in \F$ satisfies conditions (H1) and (C1). 
Then, for $n$ big enough we have:
$$\Delta(\mo_n^g, \Wh_n^g) = O\bigg(\sqrt{n}\sup_{f\in \F}\Big(A_m(f)+B_m(f)+H_m(f)\Big)+\frac{m\ln m}{\sqrt n}\bigg).$$
\end{theorem}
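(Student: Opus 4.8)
The plan is to bound $\Delta(\mo_n^g,\Wh_n^g)$ by inserting a chain of intermediate experiments between the two models and controlling the two-sided deficiency of each consecutive pair, so that by the triangle inequality for the Le Cam distance the errors add up. Concretely, I would pass through (i) a multinomial experiment $\Mi_n$ recording, for the $n$ observations, the vector of counts $(N_1,\dots,N_m)$ of how many $Y_i$ fall in each quantile cell $J_j$, with cell probabilities $\nu(J_j)=\int_{J_j}f\,d\nu_0$; (ii) an $m$-dimensional Gaussian shift experiment $\mathcal G_n$ obtained from $\Mi_n$ by the variance-stabilizing square-root transform, so that the statistics $2\sqrt{N_j}$ become approximately Gaussian with means $2\sqrt{n\,\nu(J_j)}$ and unit variance; and finally (iii) the white noise model $\Wh_n^g$, reached by identifying $\mathcal G_n$ with suitably normalized increments of $(Y_t)$ over the cells $J_j$. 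Each link must be realized by explicit Markov kernels in both directions, as demanded by the deficiency defining $\Delta$.

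The three $\sqrt n$-weighted terms then arise from density/drift comparisons at the two ends of the chain, while the $\tfrac{m\ln m}{\sqrt n}$ term arises in the middle. For the reduction to counts, mapping the full sample to $(N_1,\dots,N_m)$ is a statistic, so that direction is free; the reverse kernel redistributes the $N_j$ points inside each $J_j$ according to the normalized restriction of $\nu_0$ to $J_j$. The discrepancy between the reconstructed law and $\mo_n^g$ is governed by the within-cell fluctuation of $\sqrt f$, and a Hellinger estimate over the product structure bounds it by $\sqrt n\,\sup_f B_m(f)$, $B_m$ being exactly the Cauchy--Schwarz deficit measuring how far $\sqrt f$ is from constant on each cell. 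Replacing the sampling density $f$ by its piecewise-linear interpolant $\hat f_m$ (so that the interpolant is matched exactly by the discretized models) costs the Hellinger distance $H_m$, and subadditivity of squared Hellinger distance over the $n$-fold product again produces a factor $\sqrt n\,\sup_f H_m(f)$. Symmetrically, discretizing the white noise by integrating $dY_t$ over each cell yields, by independence of Brownian increments, a Gaussian vector with means built from $\int_{J_j}\sqrt{fg}\,dx$; this is a sufficient reduction, and matching its drift to $\mathcal G_n$ through the interpolant $\widehat{\sqrt f}_m$ costs the $L^2(\nu_0)$ error $A_m$, contributing $\sqrt n\,\sup_f A_m(f)$.

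The heart of the argument, and the main obstacle, is the middle link: the deficiency between the multinomial $\Mi_n$ and the Gaussian shift $\mathcal G_n$. Here I would invoke Carter's multinomial/multivariate-normal comparison, whose deficiency is of order $\tfrac{m\ln m}{\sqrt n}$, and this is precisely the source of the second term in the bound. Two points require care: (a) the estimate must be uniform over the nonparametric class $\F$, and (b) the cells have unequal Lebesgue length, since $I$ may be infinite. Both are controlled by (H1): the bound $\kappa\le f\le M$ forces every cell probability to satisfy $\nu(J_j)\in[\kappa\mu_n,\,M\mu_n]$ with $\mu_n=\nu_0(I)/m$, so all cell probabilities are of the common order $1/m$, bounded away from $0$ and $\infty$ uniformly in $f$ and $j$. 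This uniform lower bound is exactly what makes the square-root transform stabilize the variances cell by cell and makes Carter's estimate applicable with constants depending only on $\kappa,M$.

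Having bounded the three deficiencies, I would assemble them by the triangle inequality to obtain $O\big(\sqrt n\,\sup_{f\in\F}(A_m(f)+B_m(f)+H_m(f))+\tfrac{m\ln m}{\sqrt n}\big)$, and condition (C1) guarantees that for the chosen discretization $m=m_n$ the first group of terms vanishes (while the balance of $m$ against $n$ is what one must verify to make the whole bound tend to $0$). The genuinely new feature relative to Carter is the use of quantile cells of $\nu_0$, of equal $\nu_0$-measure but unequal Lebesgue length, in place of equal cells; accordingly the delicate check is to ensure that all constants in the Hellinger, reconstruction, and Gaussian-approximation estimates are uniform in the individual cells and depend only on $\kappa$ and $M$, which is precisely what the equal-$\nu_0$-measure construction together with (H1) delivers.
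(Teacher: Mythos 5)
Your chain of experiments is the same as the paper's (density model $\to$ multinomial on the quantile cells $\to$ $m$-variate Gaussian via the square-root transform and Carter's theorem $\to$ white noise), and the terms $H_m$, $A_m$ and $\frac{m\ln m}{\sqrt n}$ are attached to the right links. The flaw is in your treatment of $B_m$. You propose to invert the reduction to counts by redistributing the $N_j$ points inside $J_j$ according to the normalized restriction of $\nu_0$; the reconstructed sample is then i.i.d.\ with the piecewise-constant density $\sum_j\frac{\nu(J_j)}{\mu_n}g\,\I_{J_j}$, and a direct computation gives, per observation,
$$H^2\Big(P^g_f,\;P^g_{\bar f_m}\Big)=\sum_{j=1}^m 2\sqrt{\nu(J_j)}\bigg(\sqrt{\nu(J_j)}-\frac{1}{\sqrt{\mu_n}}\int_{J_j}\sqrt{f}\,d\nu_0\bigg),$$
which is \emph{linear}, not quadratic, in the Cauchy--Schwarz deficits that define $B_m$. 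After Cauchy--Schwarz and the $n$-fold product bound this yields a total-variation estimate of order $\sqrt{n\,B_m(f)}$, not $\sqrt{n}\,B_m(f)$; since (C1) only guarantees $nB_m^2\to0$, the quantity $nB_m$ may diverge (take $B_m\sim n^{-3/4}$), so this link does not close and the claimed rate does not follow. There is also an internal inconsistency: you pay $H_m$ to pass to $\hat f_m$ ``so that the interpolant is matched exactly by the discretized models,'' but your piecewise-constant reverse kernel does not reproduce $\hat f_m g$.

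The repair — which is what the paper does — is twofold. First, the reverse kernel from the counts should use the triangular/trapezoidal weights $u_j$ of \eqref{eq:u}: with those, the randomization of the multinomial reproduces $P^g_{\hat f_m}$ \emph{exactly}, so $\delta(\mathcal M_m,\hat\mo_n^g)=0$ and the only price at this end is the single $H_m$ payment already made in passing from $f$ to $\hat f_m$. Second, $B_m$ belongs at the Gaussian end of the chain: the square-root-transformed multinomial is approximated by independent $\No\big(2\sqrt{n\nu(J_j)},1\big)$, whereas the normalized increments of the white noise over $J_j$ are $\No\big(\tfrac{2\sqrt n}{\sqrt{\mu_n}}\int_{J_j}\sqrt f\,d\nu_0,\,1\big)$; the total-variation distance between these two unit-variance Gaussian vectors is controlled by the Euclidean distance between the mean vectors, which is exactly $2\sqrt n\,B_m(f)$. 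With $B_m$ moved there and the reconstruction kernel replaced by the $u_j$-kernel, your argument coincides with the paper's proof.
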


\begin{corollary}\label{cor1}
 Let $I$ be a compact subset of $\R$. For fixed $\gamma\in (0,1]$ and $K,\kappa, M$ strictly positive constants, consider the functional class
 $$\F_{(\gamma,K,\kappa,M)}=\Big\{f\in C^1(I): \varepsilon\leq f(x)\leq M, \ |f'(x)-f'(y)|\leq K|x-y|^{\gamma},\ \forall x,y\in I\Big\}.$$
 Suppose $\F\subset \F_{(\gamma,K,\kappa,M)}$. Then
 \begin{equation*}\label{eq:cor}
\Delta(\mo_n^g, \Wh_n^g)=O\Big(\sqrt n \big(\ell_m^{\gamma+1}+\sqrt \mu_n\ell_m\big)+\frac{m\ln m}{\sqrt n}\Big),
 \end{equation*}
 where $\ell_m=\max_{i=1,\dots,m}|v_j-v_{j-1}|$, with the $v_i$'s defined as in Section \ref{sec:parameter}.
 \end{corollary}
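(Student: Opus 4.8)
The plan is to derive the corollary directly from Theorem \ref{teo1}: once we show that for $\F\subset\F_{(\gamma,K,\kappa,M)}$
$$\sup_{f\in\F}\big(A_m(f)+B_m(f)+H_m(f)\big)=O\big(\ell_m^{\gamma+1}+\sqrt{\mu_n}\,\ell_m\big),$$
substituting this bound into the estimate of Theorem \ref{teo1} gives the claim. A preliminary step is to check that the regularity passes to the square root. Since every $f\in\F$ takes values in $[\kappa,M]$ with $[f']_\gamma\le K$ on the compact interval $I$, an elementary interpolation inequality bounds $\|f'\|_\infty$ by a constant depending only on $\gamma,K,\kappa,M$ and $|I|$, uniformly over $\F$; the chain rule together with the smoothness of $t\mapsto\sqrt t$ on $[\kappa,M]$ then yields $\sqrt f\in C^{1,\gamma}(I)$ with $\|(\sqrt f)'\|_\infty$ and $[(\sqrt f)']_\gamma$ bounded uniformly over $\F$. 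Moreover $\hat f_m$, being the linear interpolation of the $\nu_0$-averages $\nu(J_j)/\mu_n\in[\kappa,M]$, again takes values in $[\kappa,M]$.

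Set $\phi=\sqrt f$. On each interior interval $(x_j^*,x_{j+1}^*]$ the function $\widehat{\sqrt f}_m$ is the affine piece joining the nodal values equal to the $\nu_0$-average of $\phi$ over $J_j$ and over $J_{j+1}$. I would split the error $\phi-\widehat{\sqrt f}_m$ into the standard linear interpolation error of $\phi$ through $\big(x_j^*,\phi(x_j^*)\big)$, which is $O(\ell_m^{\gamma+1})$ by the classical $C^{1,\gamma}$ bound, plus the nodal perturbation coming from replacing $\phi(x_j^*)$ by its $\nu_0$-average over $J_j$; the latter is also $O(\ell_m^{\gamma+1})$ because the defining property $\int_{J_j}(x-x_j^*)\,\nu_0(dx)=0$ of the centroid $x_j^*$ annihilates the first-order Taylor term and leaves an $O(\ell_m^{\gamma+1})$ remainder. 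Thus $\|\phi-\widehat{\sqrt f}_m\|_\infty=O(\ell_m^{\gamma+1})$ on the interior, and since $\nu_0(I)<\infty$ this contributes $O(\ell_m^{2(\gamma+1)})$ to $A_m^2(f)$. The same scheme applied to $\hat f_m$, followed by the Lipschitz bound $|\sqrt a-\sqrt b|\le |a-b|/(2\sqrt\kappa)$ on $[\kappa,M]$, gives the identical interior contribution to $H_m^2(f)$.

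The decisive point, and the origin of the term $\sqrt{\mu_n}\,\ell_m$, is the treatment of the two outermost pieces $I\cap(-\infty,x_1^*]\subseteq J_1$ and $I\cap(x_m^*,\infty)\subseteq J_m$, where both interpolants are constant rather than affine. There the approximation is only first order: the error is at most the oscillation of $\phi$ (resp. $f$) over $J_1$ or $J_m$, hence $O(\|\phi'\|_\infty\,\ell_m)=O(\ell_m)$, over a set of $\nu_0$-measure at most $\mu_n$. Each boundary piece therefore contributes $O(\mu_n\ell_m^2)$ to $A_m^2(f)$ and $H_m^2(f)$. Collecting the interior and boundary parts and taking square roots gives $A_m(f),H_m(f)=O(\ell_m^{\gamma+1}+\sqrt{\mu_n}\,\ell_m)$ uniformly over $\F$. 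Finally, rewriting $B_m^2(f)=\mu_n\sum_{j=1}^m\big(\mathrm{mean}_{J_j}\phi-\sqrt{\mathrm{mean}_{J_j}(\phi^2)}\big)^2$, each summand is the square of a Jensen gap, bounded by $O\big((\var_{J_j}\phi)^2\big)=O(\ell_m^4)$; summing over the $m$ cells and using $m\mu_n=\nu_0(I)$ yields $B_m(f)=O(\ell_m^2)$, which is absorbed into $\ell_m^{\gamma+1}$. Substituting the three bounds into Theorem \ref{teo1} produces the stated rate. The main obstacle is not the interior analysis, which becomes routine once $\sqrt f\in C^{1,\gamma}$ uniformly, but correctly isolating the two degenerate boundary cells and checking that every implied constant is uniform over the class $\F$.
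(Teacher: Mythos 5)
Your proposal is correct and follows essentially the same route as the paper: reduce to uniform bounds on $A_m$, $B_m$, $H_m$ and substitute into Theorem \ref{teo1}, with the interior pieces controlled at order $\ell_m^{1+\gamma}$ via the centroid property $\int_{J_j}(x-x_j^*)\nu_0(dx)=0$ killing the first-order Taylor term, and the two constant boundary cells producing the $\sqrt{\mu_n}\,\ell_m$ contribution --- this is exactly the content of the paper's Lemma \ref{lemma:ax}. Two minor points where you genuinely add something: you give a self-contained Jensen-gap bound for $B_m$ (writing $B_m^2(f)=\mu_n\sum_j(\mathrm{mean}_{J_j}\sqrt f-\sqrt{\mathrm{mean}_{J_j}f})^2$ and bounding each gap by a variance over $J_j$), where the paper simply cites Lemma 3.10 of \cite{esterlevy}; and you correctly observe that transferring the $C^{1,\gamma}$ bound to $\sqrt f$ requires a uniform bound on $\|f'\|_\infty$ via an interpolation inequality, a step the paper glosses over when asserting $\sqrt f\in\F_{(\gamma,K/\sqrt\kappa,\sqrt\kappa,\sqrt M)}$.
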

\subsection{Existing literature and discussion}\label{sec:discussion}
As it has already been highlighted in the introduction, our result is a generalization of those in \cite{N96} and \cite{cmultinomial}. In order to discuss the link between our work and the previous ones, we recall the results contained in these papers.
\begin{itemize}
 \item \emph{Asymptotic equivalence of density estimation and Gaussian white noise}, \cite{N96}.
 
 In this paper Nussbaum establishes a global asymptotic equivalence between the problem of density estimation from an i.i.d. sample and a Gaussian white noise model. More precisely, let $(Y_i)_{i=1}^n$ be i.i.d. random variables with density $f$ on $[0,1]$ with respect to the Lebesgue measure. 
 The densities $f$ are the unknown parameters and they are supposed to belong to a certain nonparametric class $\F$ subject to a Hölder restriction: $|f(x)-f(y)|\leq C|x-y|^\alpha$ with $\alpha>\frac{1}{2}$ and a positivity restriction: $f(x)\geq \varepsilon >0$. Let us denote by $\mo_{1,n}$ the statistical model associated with the observation of the $Y_i$'s. Furthermore, let $\mo_{2,n}$ be the experiment in which one observes a stochastic process $(Y_t)_{t\in[0,1]}$ such that
$$dY_t=\sqrt{f(t)}dt+\frac{1}{2\sqrt n}dW_t,\quad t\in[0,1]$$
where $(W_t)_{t\in[0,1]}$ is a standard Brownian motion.
Then the main result in \cite{N96} is that $\Delta(\mo_{1,n},\mo_{2,n})\to 0$ as $n\to\infty$.

This is done by first showing that the result holds for certain subsets $\F_n(f_0)$ of the class $\F$ described above. Then it is shown that one can estimate the $f_0$ rapidly enough to fit the various pieces together. 
Without entering into any detail, let us just mention that the key steps are a Poissonization technique and the use of a functional KMT inequality.

\item \emph{Deficiency distance between multinomial and multivariate normal experiments,} \cite{cmultinomial}. 

In this paper Carter establishes a global asymptotic equivalence between a density estimation model and a Gaussian white noise model by bounding the Le Cam distance between multinomial and multivariate normal random variables. More precisely, let us denote by $\mathcal M(n,\theta)$ the multinomial distribution, where $\theta:=(\theta_1,\dots,\theta_m)$. Denote the covariance matrix $nV_\theta$: Its $(i,j)$th element equals to $n\theta_i(1-\theta_i)\delta_{i,j}-n\theta_i\theta_j$. 

The main result is an upper bound for the Le Cam distance $\Delta(\mathcal M,\mathcal N)$ between the models $\mathcal M:=\{\mathcal M(n,\theta):\theta\in\Theta\}$ and $\mathcal N:=\{\Nn(n\theta,nV_\theta):\theta\in\Theta\}$, under some regularity assumptions on $\Theta$. In particular, Carter proves that
$$\Delta(\mathcal M,\mathcal N)\leq C'_\Theta\frac{m\ln m}{\sqrt n} \quad \textnormal{ provided } \ \sup_{\theta\in\Theta}\frac{\max_i \theta_i}{\min_i \theta_i}\leq C_\Theta<\infty,$$
for a constant $C'_\Theta$ that depends only on $C_\Theta$. 
From this inequality Carter can recover most the same results as Nussbaum \cite{N96} under stronger regularity assumptions on $\mathscr{F}$: $\F$ 
is a class of smooth, differentiable densities $f$ on the interval $[0,1]$ such that there exist strictly positive constants $\varepsilon,M,\gamma$ such that $\varepsilon\leq f\leq M$ and
$$|f'(x)-f'(y)|\leq M |x-y|^{\gamma}, \quad \textnormal{for all } x,y\in [0,1].$$
Let us briefly explain how one can use a bound on the distance between multinomial and multivariate normal variables to make assertions about density estimation experiments. The idea is to see the multinomial experiment as the result of grouping independent observations from a continuous density into subsets. Using the square root as a variance-stabilizing transformation, these multinomial variables can be asymptotically approximated by normal variables with constant variances. These normal variables, in turn, are approximations to the increments of the Brownian motion processes over the sets in the partition. 
\end{itemize}
Our work can be seen as a generalization of the previously cited works: To see that it is enough to take $g(x)=\I_{[0,1]}(x)$ and apply Corollary \ref{cor1}. However, it differs from Nussbaum and Carter's results in several aspects. First of all, we do not need to ask the random variables to be defined on $[0,1]$, allowing the observations to be defined on a possibly infinite interval $I$ of $\mathbb{R}$. Secondly, in our setting the positivity restriction on the densities can be removed. Indeed, as a parametric example, we can consider truncated Gamma distributions on $[0,L]$, that is distributions having a density $h$ with respect to the Lebesgue measure: 
$$h(x)=\frac{\exp(-\theta x)\theta^n x^{n-1}}{\int_0^L \exp(-\theta y)\theta^n y^{n-1}dy}\I_{[0,L]}(x).$$
We can apply Theorem \ref{teo1}, taking $\F=\{f_\theta:\theta\in\R_{>0}\}$ and
$$f_\theta(x)=\frac{\exp(-\theta x)\theta^n}{\int_0^L \exp(-\theta y)\theta^n y^{n-1}dy}\I_{[0,L]}(x),\quad g(x)=x^{n-1}.$$

More generally, density functions $h$ that can be written in form of a product are commonly used in statistics. Again, one could cite as a simple case the problem of a parametric estimation for a Weibull density, see, e.g. \cite{gardes, diebolt}. 
Generally speaking, the present work can be useful whenever the random variables $Y_i$'s do not admit a smooth density $h$ with respect to Lebesgue, but nevertheless one has some informations on the discontinuity structure, namely one knows $g$ in the decomposition $h(x) = f(x)g(x)$.
\section{Proofs}\label{sec:proofs}
\subsection{Proof of Theorem \ref{teo1}}

We will proceed in four steps.

Step 1. By means of Facts \ref{h} and \ref{hp}, we get

\begin{align*}
 \bigg\|\bigotimes_{i=1}^n P_f^g-\bigotimes_{i=1}^n P_{\hat f_m}^g\bigg\|_{TV}&\leq H\bigg(\bigotimes_{i=1}^n P_f^g,\bigotimes_{i=1}^n P_{\hat f_m}^g\bigg)
                                       \leq\sqrt{n H^2\big(P_{f}^g,P_{\hat f_m}^g\big)}.
\end{align*}
Hence, denoting by $\hat\mo_n^g$ the statistical model associated with the family of probabilities $\big\{P_{\hat f_m}^g;f\in\F\big\}$:
\begin{equation}\label{eq:fhatf}
\Delta(\mo_n^g,\hat\mo_n^g)\leq \sqrt{n\int_I\bigg(\sqrt {f(x)}-\sqrt{\hat f_m(x)}\bigg)^2g(x)dx}.
\end{equation}

Step 2. Following the same approach as in \cite{cmultinomial}, we introduce an auxiliary multinomial experiment to get closer to a normal one representing the increments of $(Y_t)_{t\in I}$ defined as in \eqref{eq:Y}. The multinomial experiment is linked with the density estimation model in the following way: Let $\hat{Y}_i$ be a set of i.i.d. random variables with density $\hat{f}_m g$ with respect to Lebesgue and define the multinomial experiment by grouping their observations into subsets. More precisely, let us introduce the random variables:
$$Z_i=\sum_{j=1}^n\I_{J_i}(\hat Y_j),\ i=1,\dots,m.$$
Observe that the law of the vector $(Z_1,\dots,Z_m)$ is multinomial $\mathcal M(n;\gamma_1,\dots,\gamma_m)$ where
$$\gamma_i=\int_{J_i}f(x)g(x)dx,\quad i=1,\dots,m;$$
here we have used the fact that $\int_{J_i}f(x)g(x)dx=\int_{J_i}\hat f_m(x)g(x)dx$.
Let us denote by $\mathcal M_m$ the statistical model associated with the observation of $(Z_1,\dots,Z_m)$. Clearly $\delta(\hat\mo_n^g,\mathcal M_m)=0$. Indeed, $\mathcal M_m$ is the image experiment by the random variable $S:I^n\to\{1,\dots,n\}^{m}$ defined as 
$$S(x_1,\dots,x_n)=\Big(\#\big\{j: x_j\in J_1\big\};\dots;\#\big\{j: x_j\in J_m\big\}\Big),$$
where $\# A$ denotes the cardinal of the set $A$. To conclude the second step we now prove that the multinomial experiment is as informative as $\hat\mo_n^g$:

\begin{lemma}\label{lemma:multinomial}
 $$\delta(\mathcal M_m,\hat\mo_n^g) = 0.$$
\end{lemma}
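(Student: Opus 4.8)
The plan is to read the statement $\delta(\mathcal{M}_m,\hat\mo_n^g)=0$ as the assertion that there exists a single Markov kernel, not depending on $f$, carrying the multinomial law into the product law of the sample; producing such a kernel is the entire content of the lemma. Equivalently, I would show that the count vector $S=(Z_1,\dots,Z_m)$ is a \emph{sufficient} statistic for the family $\big\{\bigotimes_{i=1}^n P_{\hat f_m}^g:f\in\F\big\}$. Sufficiency is the natural target here because it is exactly the abstract reason one can invert the reduction $S$: once the conditional law of the sample given $S$ is free of $f$, that conditional law \emph{is} the reconstruction kernel, and its push-forward of $\mathcal{M}(n;\gamma_1,\dots,\gamma_m)$ returns $\bigotimes_{i=1}^n P_{\hat f_m}^g$ for every $f$ at once. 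Combined with the already-noted $\delta(\hat\mo_n^g,\mathcal{M}_m)=0$ this would even give full equivalence, but only this direction is needed.

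Concretely, I would first record the factorization of the likelihood $\prod_{j=1}^n \hat f_m(\hat y_j)g(\hat y_j)$. The factor $\prod_{j}g(\hat y_j)$ carries no dependence on $f$, while, because $\hat f_m$ is constant and equal to $\nu(J_i)/\mu_n$ on each bin $J_i$, the remaining factor collapses to $\prod_{i=1}^m\big(\nu(J_i)/\mu_n\big)^{Z_i}$, i.e.\ it depends on the data only through $S$. By the factorization criterion this makes $S$ sufficient. The crux of the argument, and the step I expect to demand the most care, is the equivalent statement at the level of a single observation: conditionally on $\{\hat Y\in J_i\}$, the law of $\hat Y$ has density $\hat f_m(x)g(x)/\gamma_i=g(x)\I_{J_i}(x)/\mu_n$, that is, it is simply $\nu_0$ restricted to $J_i$ and renormalised by $\mu_n=\nu_0(J_i)$. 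The whole lemma rests on this within-bin conditional law being independent of $f$, which is precisely what the constancy of $\hat f_m$ on $J_i$ buys; I would isolate and verify this point explicitly, since it is the only place where a parameter dependence could survive.

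With that in hand I would make the kernel explicit, in keeping with the constructive spirit of the paper. Given an outcome $(z_1,\dots,z_m)$ of $\mathcal{M}_m$, draw independently across $i$ exactly $z_i$ i.i.d.\ points in $J_i$ from the $f$-free law $\nu_0(\,\cdot\cap J_i)/\mu_n$, pool the resulting $n$ points, and relabel them by a uniformly chosen permutation of $\{1,\dots,n\}$. To finish I would disintegrate the target law $\bigotimes_{i=1}^n P_{\hat f_m}^g$ over the counts: given the counts, the observations are conditionally i.i.d.\ within each bin according to exactly the law above, and their labels form a uniform arrangement compatible with the counts, which is what the kernel reproduces. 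Verifying that the push-forward matches for every $f$ is then bookkeeping with multinomial coefficients together with the exchangeability of the sample; the only genuine obstacle is the parameter-independence of the within-bin conditional, everything else being routine.
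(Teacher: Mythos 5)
Your strategy (exhibit sufficiency of the count vector $S=(Z_1,\dots,Z_m)$, then use the $f$-free conditional law of the sample given $S$ as the reconstruction kernel) is a legitimate abstract route to $\delta(\mathcal M_m,\hat\mo_n^g)=0$, but it rests on a false reading of the model: you assume that $\hat f_m$ is piecewise constant, equal to $\nu(J_i)/\mu_n$ on each bin $J_i$. In the paper, \eqref{eq:hatf} defines $\hat f_m$ as the \emph{continuous piecewise-linear interpolation} of the values $\nu(J_j)/\mu_n$ attained at the barycenters $x_j^*=\mu_n^{-1}\int_{J_j}x\,\nu_0(dx)$, so on the interior of a bin $\hat f_m(x)$ is a nonconstant affine function of $x$ whose coefficients involve $\nu(J_{j-1})$, $\nu(J_j)$ and $\nu(J_{j+1})$. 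Consequently the likelihood $\prod_j\hat f_m(\hat y_j)g(\hat y_j)$ does \emph{not} collapse to $\prod_i\big(\nu(J_i)/\mu_n\big)^{Z_i}$ times an $f$-free factor, the factorization criterion does not apply, and the conditional law of $\hat Y$ given $\{\hat Y\in J_i\}$ has density $\hat f_m(x)g(x)\I_{J_i}(x)/\gamma_i$, which still depends on $f$. This is exactly the point you flagged as ``the only place where a parameter dependence could survive'' --- and it does survive.

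As a result, the explicit kernel you build (draw $z_i$ i.i.d.\ points in $J_i$ from $\nu_0(\,\cdot\cap J_i)/\mu_n$, pool and relabel) pushes $\mathcal M(n;\gamma_1,\dots,\gamma_m)$ forward to exchangeable draws whose one-dimensional marginal has density $\sum_i\frac{\nu(J_i)}{\mu_n}\I_{J_i}(x)\,g(x)$ with respect to Lebesgue, i.e.\ the \emph{histogram} surrogate of $fg$, not $\hat f_m g$; you have proved the lemma for a different model than $\hat\mo_n^g$ as defined. The paper's proof avoids this by reconstructing with the triangular/trapezoidal densities $u_j$ of \eqref{eq:u}, peaked at the $x_j^*$ and normalized so that $\sum_j\nu(J_j)u_j=\hat f_m$: choosing the label $j$ with probability $k_j/n$ and then sampling from $u_j\,d\nu_0$ yields, after averaging over the multinomial, exactly the tent-function mixture $\hat f_m\,d\nu_0$. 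To repair your argument you would have to replace the within-bin uniform laws by these tent densities (and abandon the sufficiency framing, since $S$ is not sufficient for the interpolated family), or else restate the lemma for the piecewise-constant surrogate and adjust Step 1 of the main proof accordingly.
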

\begin{proof}
 We need to produce an explicit Markov kernel that allows to approximate the density $\hat{f}_m g$ given an observation from the multinomial model. For all $j=2,\dots,m-1$, let $u_j(x)$ be the (compactly supported) triangular shaped function, such that 
 \begin{equation}\label{eq:u}
  u_j(x_{j-1}^*)=0,\quad u_j(x_{j}^*)=\frac{1}{\nu_0(J_j)}=\frac{m}{\mu_n},\quad u_j(x_{j+1}^*)=0, 
 \end{equation}
linearly interpolated between these values. We also define analogously (compactly supported) trapezoidal shaped functions $u_1$, $u_m$; the former is supported on $[0,x_2^*]$, where it is the linear interpolation of 
$$u_1(0)=u_1(x_1^*)=\frac{1}{\nu_0(J_1)}\quad \textnormal{and}\quad u_1(x_2^*)=0.$$
$u_m$ is defined analogously on $[x_{m-1}^*,1]$ with $u_m(x_{m-1}^*)=0$ and $u_m(x_m^*)=u_m(x)=\frac{1}{\nu_0(J_m)}$, for all $x>x_m^*$. The required (randomized) Markov kernel is then
$$
K\big((k_1,\dots,k_m), A\big) = \int_A u_{X_{(k_1,\dots,k_m)}}(x) \nu_0(dx), \quad \forall (k_1, \dots, k_m) \in \N, \ \sum_i k_i = n,\  A \subset \R,
$$
where $X_{(k_1,\dots,k_m)} \in \{1, \dots, m\}$ is a randomly chosen integer assigning to $j$ the weight $\frac{k_j}{n}$.
\end{proof}

Step 3. Let us denote by $\mathcal N_m$ the statistical model associated with the observation of $m$ independent Gaussian variables $\No(\sqrt{n\gamma_i},\frac{1}{4})$, $i=1,\dots,m$. Since $\frac{\max \gamma_i}{\min \gamma_i}\leq \frac{M}{\kappa}$, one can apply Theorem \ref{teocarter} obtaining
 $$\Delta(\mathcal M_m,\mathcal N_m)=O\Big(\frac{m \ln m}{\sqrt{n}}\Big).$$
 Here the $O$ depends only on $M$ and $\kappa$.
 
 Step 4. Finally, we conclude the proof of Theorem \ref{teo1}, by showing that 
 \begin{equation}\label{eq:1}
\Delta(\mathscr{N}_m,\Wh_n^g)\leq 2\sqrt{n}\sup_{f\in \F} \big(A_m(f)+B_m(f)\big).  
 \end{equation}

 As a preliminary remark note that $\Wh_n^g$ is equivalent to the model that observes a trajectory from:
$$d\bar y_t=\sqrt{f(t)}g(t)dt+\frac{\sqrt{g(t)}}{2\sqrt{n}}dW_t,\quad t\in I.$$
In order to prove \eqref{eq:1} we proceed in the following way: First of all, we prove that $\mathscr N_m$ is equivalent to the model that observes the increments on the intervals $J_i$ of $(\bar y_t)_{t\in I}$. Secondly, we show that the increments of $(\bar y_t)_{t\in I}$ are more informative than another Gaussian process, say $(Y_t^*)_{t\in I}$, that turns out to be very close to $(\bar y_t)_{t\in I}$ in the total variation distance. We then conclude the asymptotic equivalence between $\mathscr{N}_m$ and $\Wh_n^g$ observing that the increments of $(\bar y_t)_{t\in I}$ are obviously less informative than $\Wh_n^g$. 

Let us denote by $\bar Y_j$ the increments of the process $(\bar y_t)$ over the intervals $J_j$,  $j=1,\dots,m$, i.e.
$$\bar Y_j:=\bar y_{v_j}-\bar y_{v_{j-1}}\sim\No\bigg(\int_{J_j}\sqrt{f(y)}\nu_0(dy),\frac{\nu_0(J_j)}{4n}\bigg)$$
and denote by $\mathscr{\bar N}_m$ the statistical model associated with the distributions of these increments. 
As announced we start by bounding the Le Cam distance between $\mathscr{N}_m$ and $\mathscr{\bar N}_m$ showing that 
\begin{equation}\label{eq:normali}
\Delta(\mathscr{N}_m,\mathscr{\bar N}_m)\leq 2\sqrt{n} \sup_{f\in \F} B_m(f), \ \textnormal{ for all m}. 
\end{equation}
In this regard, remark that the experiment $\mathscr{\bar N}_m$ is equivalent to another experiment, say $\mathscr{N}^{\#}_m$, that observes $m$ independent Gaussian random variables of means $\frac{2\sqrt{n}}{\sqrt{\nu_0(J_j)}}\int_{J_j}\sqrt{f(y)}\nu_0(dy)$, $j=1,\dots,m$ and variances identically $1$.
Hence, using also Property \ref{delta0}, Facts \ref{h} and \ref{fact:gaussiane} we get:
\begin{align*}
\Delta(\mathscr{N}_m, \mathscr{\bar N}_m)\leq\Delta(\mathscr{N}_m, \mathscr{N}^{\#}_m)&\leq \sqrt{\sum_{j=1}^m\bigg(\frac{2\sqrt{n}}{\sqrt{\nu_0(J_j)}}\int_{J_j}\sqrt{f(y)}\nu_0(dy)-2\sqrt{n\nu(J_j)}\bigg)^2}.
\end{align*}

Using similar ideas as in Section 8.2 of \cite{cmultinomial} and Lemma 3.2 of \cite{esterlevy}, we introduce a new stochastic process constructed from the random variables $\bar Y_j$'s. To that end recall the notation introduced in the proof of Lemma \ref{lemma:multinomial}, see \eqref{eq:u}, and define
\begin{equation}\label{eq:Y*}
Y_t^*=\sum_{j=1}^m\bar Y_j\int_{I\cap [0,t]} u_j(y)\nu_0(dy)+\frac{1}{2\sqrt{n}}\sum_{j=1}^m\sqrt{\nu_0(J_j)}B_j(t),\quad t\in I,
\end{equation}
where the $(B_j(t))_t$ are independent centered Gaussian processes with variances
$$\textnormal{Var}(B_j(t))=\int_{I\cap [0,t]}u_j(y)\nu_0(dy)-\bigg(\int_{I\cap [0,t]} u_j(y)\nu_0(dy)\bigg)^2.$$
By construction, $(Y_t^*)$ is a Gaussian process with mean and variance given by, respectively:
\begin{align*}
 \E[Y_t^*]&=\sum_{j=1}^m\E[\bar Y_j]\int_{I\cap [0,t]} u_j(y)\nu_0(dy)=\sum_{j=1}^m\bigg(\int_{J_j}\sqrt{f(y)}\nu_0(dy)\bigg)\int_{I\cap [0,t]} u_j(y)\nu_0(dy),\\
 \textnormal{Var}[Y_t^*]&=\sum_{j=1}^m\textnormal{Var}[\bar Y_j]\bigg(\int_{I\cap [0,t]} u_j(y)\nu_0(dy)\bigg)^2+\frac{1}{4 n}\sum_{j=1}^m \nu_0(J_j)\textnormal{Var}(B_j(t))\\
   &= \frac{1}{4 n}\int_{I\cap [0,t]} \sum_{j=1}^m \nu_0(J_j) u_j(y)\nu_0(dy)= \frac{1}{4 n}\int_{I\cap [0,t]} 1 \nu_0(dy)=\frac{\nu_0({I\cap [0,t]})}{4 n}.
\end{align*}
Therefore, 
$$Y^*_t=\int_{I\cap [0,t]} \widehat{\sqrt {f}}_m(y)\nu_0(dy)+\frac{\sqrt{g(t)}}{2\sqrt{n}}W_t,\quad t\in I,$$
where 
$$\widehat{\sqrt {f}}_m(x):=\sum_{j=1}^m\bigg(\int_{J_j}\sqrt{f(y)}\nu_0(dy)\bigg)u_j(x).$$
Applying Fact \ref{fact:processigaussiani}, we get that the total variation distance between the process $(Y_t^*)_{t\in I}$ constructed from the random variables $\bar Y_j$, $j=1,\dots,m$ and the Gaussian process $(Y_t)_{t\in I}$ is bounded by
$$\sqrt{4 n\int_I\big(\widehat{\sqrt {f}}_m(y)-\sqrt{f(y)}\big)^2\nu_0(dy)},$$
as wanted.

\subsection{Proof of Corollary \ref{cor1}}
We start by proving a technical Lemma needed for the proof of Corollary \ref{cor1}. Recall the following notations: $\mu_n=\nu_0(J_j)$, for all $j$ and $\ell_m=\max_{i=1,\dots,m}|v_j-v_{j-1}|$, with the $v_i$'s defined as in Section \ref{sec:parameter}.
\begin{lemma}\label{lemma:ax}
 If $f\in \F_{(\gamma,K,\kappa,M)}$ then
 $$\|f-\hat f_m\|_{L_2(\nu_0)}^2\leq O\Big(\mu_n \ell_n^2+\ell_n^{2+2\gamma}\Big),$$
 with  the $O$ depending on $K, M$ and $\kappa$.
 \end{lemma}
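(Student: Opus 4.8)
The plan is to estimate $\int_I (f-\hat f_m)^2\,\nu_0(dx)$ separately on the two boundary pieces $I\cap(-\infty,x_1^*]$ and $I\cap(x_m^*,\infty)$, where $\hat f_m$ is constant, and on the interior region $(x_1^*,x_m^*]$, where $\hat f_m$ is the piecewise linear interpolation through the barycenters $x_j^*$. The point is that on the interior the barycentric construction produces a second-order (in the Hölder sense) pointwise error of size $O(\ell_m^{1+\gamma})$, whereas on the two boundary pieces the approximation by a single constant leaves an uncancelled first-order term and only yields $O(\ell_m)$; after squaring and integrating against $\nu_0$ these become the $\ell_m^{2+2\gamma}$ and $\mu_n\ell_m^2$ contributions respectively. (Here $\ell_n$ in the statement is the same quantity as $\ell_m=\max_j|v_j-v_{j-1}|$.)

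The key preliminary estimate is that the value $\frac{\nu(J_j)}{\mu_n}$ carried by $\hat f_m$ at the node $x_j^*$ is close to $f(x_j^*)$. Writing, for $x\in J_j$, $f(x)=f(x_j^*)+f'(x_j^*)(x-x_j^*)+\int_{x_j^*}^x(f'(t)-f'(x_j^*))\,dt$ and averaging against $\nu_0$ over $J_j$, the definition $x_j^*=\frac{1}{\mu_n}\int_{J_j}x\,\nu_0(dx)$ makes the linear term vanish exactly, so that the Hölder hypothesis on $f'$ gives
$$\Big|\frac{\nu(J_j)}{\mu_n}-f(x_j^*)\Big|\le\frac{1}{\mu_n}\int_{J_j}\Big|\int_{x_j^*}^x (f'(t)-f'(x_j^*))\,dt\Big|\,\nu_0(dx)\le\frac{K}{1+\gamma}\,\ell_m^{1+\gamma}.$$
This cancellation of the first-order term, coming from the barycentric choice of $x_j^*$, is the heart of the argument.

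On an interior interval $(x_j^*,x_{j+1}^*]$ I would compare $f$ first to the genuine linear interpolation $L_j$ of $f(x_j^*)$ and $f(x_{j+1}^*)$, and then $L_j$ to $\hat f_m$. A standard computation using $|f'(t)-f'(s)|\le K|t-s|^\gamma$ bounds $|f-L_j|$ by a constant times $(x_{j+1}^*-x_j^*)^{1+\gamma}\le (2\ell_m)^{1+\gamma}$, while $|L_j-\hat f_m|$ is a convex combination of the two nodal errors controlled by the displayed inequality; hence $|f-\hat f_m|=O(\ell_m^{1+\gamma})$ there. Squaring and summing over $j$ contributes at most $O(\ell_m^{2+2\gamma})\,\nu_0((x_1^*,x_m^*])\le O(\ell_m^{2+2\gamma})\,\nu_0(I)$. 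On the boundary piece $I\cap(-\infty,x_1^*]$, where $\hat f_m\equiv\frac{\nu(J_1)}{\mu_n}\approx f(x_1^*)$, the difference $f(x)-f(x_1^*)=f'(x_1^*)(x-x_1^*)+O(\ell_m^{1+\gamma})$ is only first order, so $|f-\hat f_m|\le \|f'\|_\infty\,\ell_m+O(\ell_m^{1+\gamma})$; integrating the square over this set, whose $\nu_0$-measure is at most $\mu_n$, gives $O(\|f'\|_\infty^2\,\mu_n\ell_m^2)$, and symmetrically for the right piece. Summing the three contributions yields the claim.

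The only point requiring care is the dependence of the constants on $K,M,\kappa$: this enters through $\|f'\|_\infty$, which I would bound by picking, via the mean value theorem, a point $\xi\in I$ with $|f'(\xi)|\le (M-\kappa)/|I|$ and then using $|f'(x)|\le|f'(\xi)|+K|x-\xi|^\gamma\le (M-\kappa)/|I|+K|I|^\gamma$, with $I$ fixed. The main obstacle is really the interior estimate: one must exploit the barycentric choice of the nodes (so that the first-order term disappears and the interpolation error is genuinely of order $\ell_m^{1+\gamma}$ rather than $\ell_m$) and keep track of the fact that the intervals $J_j$ have unequal lengths, all bounded by $\ell_m$, so that consecutive nodes satisfy $x_{j+1}^*-x_j^*\le 2\ell_m$.
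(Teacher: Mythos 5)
Your proof is correct and follows essentially the same route as the paper's: a Taylor expansion at the barycenters $x_j^*$, the cancellation $\int_{J_j}(x-x_j^*)\nu_0(dx)=0$ to get nodal errors of order $\ell_m^{1+\gamma}$, and the split between the interior (where the pointwise error is $O(\ell_m^{1+\gamma})$, yielding the $\ell_m^{2+2\gamma}$ term) and the two constant boundary pieces of $\nu_0$-measure at most $\mu_n$ (where an uncancelled first-order term yields $\mu_n\ell_m^2$). The only cosmetic differences are your intermediate comparison with the exact linear interpolant $L_j$ and your explicit bound on $\|f'\|_\infty$ via the mean value theorem, where the paper instead bounds the slope of $\hat f_m$ directly by $M$.
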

\begin{proof}
Let us consider the Taylor expansion of $f$ at points $x_j^*$, where $x$ denotes a point in $(x_{j-1}^*,x_{j}^*]$ , $j=1,\dots,m$:
\begin{align}\label{eq:taylor}
 f(x)&=f(x_j^*)+f'(x_j^*)(x-x_j^*)+R(x).
\end{align}
The smoothness condition on $f$ allows us to bound the error $R$ as follows: 
 \begin{align*}
  |R(x)|&=\Big|f(x)-f(x_{j}^*)-f'(x_{j}^*)(x-x_{j}^*)\Big|\\
    &=\big|f'(\xi_j)-f'(x_{j}^*)\big||\xi_j-x_{j}^*|\leq K\ell_m^{1+\gamma},
 \end{align*}
where $\xi_j$ is a certain point in $(x_{j-1}^*,x_{j}^*]$.

By the linear character of $\hat f_m$, we can write:
$$\hat f_m(x)=\hat f_m(x_j^*)+\hat f_m'(x_j^*)(x-x_j^*)$$
where $\hat f_m'$ denotes the left or right derivative of $\hat f_{m}$ in $x_j^*$ depending whether $x<x_j^*$ or $x>x_j^*$; this equals to $f'(t)$ for some $t \in J_j$, which allows us to exploit the Hölder condition. Indeed, if $x\in J_j$, $j=1,\dots,m$, then there exists $t\in J_j$ such that: 
\begin{align*}
|f(x)-\hat f_m(x)|&\leq |f(x_j^*)-\hat f_m(x_j^*)|+|f'(x_j^*)-f'(t)||t-x_j^*|+|R(x)|\\
        &\leq |f(x_j^*)- \hat f_m(x_j^*)|+K|t-x_j^*|^{\gamma+1} + K\ell_m^{1+\gamma}\leq |f(x_j^*)- \hat f_m(x_j^*)|+2K\ell_m^{1+\gamma}.
\end{align*}
Using \eqref{eq:taylor} and the fact that $\int_{J_j}(x-x_{j}^*)\nu_0(dx)=0$, one gets:
$$\big|f(x_{j}^*)-\hat f_{m}(x_{j}^*)\big|=\frac{1}{\nu_0(J_j)}\bigg|\int_{J_j}\big(f(x_{j}^*)-f(x)\big)\nu_0(dx)\bigg|\leq K\ell_m^{1+\gamma}.$$

Moreover, observe that, for all $x\in J_i$, $i=1,\dots,m$, $\big|f(x)-\frac{\nu(J_j)}{\nu_0(J_j)}\big|$, is bounded by $3K\ell_m^{1+\gamma}+\ell_m M$, indeed:
 \begin{align*}
  \bigg|f(x)-\frac{\nu(J_j)}{\nu_0(J_j)}\bigg|&=|f(x)-\hat f_m(x_i^*)|\leq |f(x)-\hat f_m(x)|+|\hat f_m(x)-\hat f_m(x_i^*)|\\
                                    & \leq 3K\ell_m^{1+\gamma}+ |\hat f_m'(x_i^*)(x-x_i^*)|
                                    \leq 3K\ell_m^{1+\gamma}+ M \ell_m.
 \end{align*}
 Collecting all the pieces together we find
 \begin{align*}
 \int_I\big(f(x)-\hat f_m(x)\big)^2\nu_0(dx) \leq 2\mu_n\Big(3K\ell_m^{1+\gamma}+M\ell_m\Big)^2+18K^2\ell_m^{2+2\gamma}.
\end{align*}
 \end{proof}
\begin{proof}[Proof of Corollary \ref{cor1}]
First of all, let us observe that $\nu_0(I)$ is finite; indeed, the positivity condition on $f$ ($f(x)\geq \kappa>0$) implies that $\nu_0(I)\leq \frac{1}{\kappa}$.
Also, by means of the fact that $f(x)\geq \kappa$ for all $x\in I$ one can write:
 $$\int_I\bigg(\sqrt{f(x)}-\sqrt{\hat f_m(x)}\bigg)^2 g(x)dx=\int_I\bigg(\frac{f(x)-\hat f_m(x)}{\sqrt{f(x)}+\sqrt{\hat f_m(x)}}\bigg)^2 g(x) dx\leq \frac{1}{4\kappa}\int_I\big(f(x)-\hat f_m(x)\big)^2 g(x)dx.$$
 
 A straightforward application of Lemma \ref{lemma:ax} gives
 $$H_m^2(f)=O\Big(\mu_n \ell_m^2+\ell_m^{2+2\gamma}\Big).$$
 The same bound holds for $A_m^2(f)$ since if $f\in\F_{(\gamma,K,\kappa,M)}$ then $\sqrt f\in\F_{(\gamma,\frac{K}{\sqrt\kappa},\sqrt \kappa,\sqrt M)}$. 
 Moreover, one can see that $B_m$ converges with the same rate as $A_m$. This may be done by explicit computations, see \cite{esterlevy}, Lemma 3.10 for more details.
 \end{proof}

 \appendix
\section{Background}
\subsection{Le Cam theory of statistical experiments}\label{sec:lecam}
A \emph{statistical model} or \emph{experiment} is a triplet $\mo_j=(\X_j,\A_j,\{P_{j,\theta}; \theta\in\Theta\})$ where $\{P_{j,\theta}; \theta\in\Theta\}$ 
is a family of probability distributions all defined on the same $\sigma$-field $\A_j$ over the \emph{sample space} $\X_j$ and $\Theta$ is the \emph{parameter space}.
The \emph{deficiency} $\delta(\mo_1,\mo_2)$ of $\mo_1$
with respect to $\mo_2$ quantifies ``how much information we lose'' by using $\mo_1$ instead of $\mo_2$ and it is defined as
$\delta(\mo_1,\mo_2)=\inf_K\sup_{\theta\in \Theta}||KP_{1,\theta}-P_{2,\theta}||_{TV},$
 where TV stands for ``total variation'' and the infimum is taken over all ``transitions'' $K$ (see \cite{lecam}, page 18). The general definition of transition is quite involved but, for our purposes, it is enough to know that (possibly randomized) Markov kernels are special cases of transitions. By $KP_{1,\theta}$ we mean the image measure of $P_{1,\theta}$ via the Markov kernel $K$, that is
 $$KP_{1,\theta}(A)=\int_{\X_1}K(x,A)P_{1,\theta}(dx),\quad\forall A\in \A_2.$$
 The experiment $K\mo_1=(\X_2,\A_2,\{KP_{1,\theta}; \theta\in\Theta\})$ is called a \emph{randomization} of $\mo_1$ by the Markov kernel $K$. When the kernel $K$ is deterministic, that is $K(x,A)=\I_{A}S(x)$ for some random variable $S:(\X_1,\A_1)\to(\X_2,\A_2)$, the experiment $K\mo_1$ is called the \emph{image experiment by the random variable} $S$.
The Le Cam distance is defined as the symetrization of $\delta$ and it defines a pseudometric. When $\Delta(\mo_1,\mo_2)=0$  the two statistical models are said to be \emph{equivalent}.
Two sequences of statistical models $(\mo_{1}^n)_{n\in\N}$ and $(\mo_{2}^n)_{n\in\N}$ are called \emph{asymptotically equivalent}
if $\Delta(\mo_{1}^n,\mo_{2}^n)$ tends to zero as $n$ goes to infinity. 
A very interesting feature of the $\Delta$-distance is that it can be also translated in terms of statistical decision theory.
Let $\D$ be any (measurable) decision space and let $L:\Theta\times \D\mapsto[0,\infty)$ denote a loss function. Let $\|L\|=\sup_{(\theta,z)\in\Theta\times\D}L(\theta,z)$. Let $\pi_i$ denote a (randomized) decision procedure in the $i$-th experiment. Denote by $R_i(\pi_i,L,\theta)$ the risk from using procedure $\pi_i$ when $L$ is the loss function and $\theta$ is the true value of the parameter. Then, an equivalent definition of the deficiency is:
\begin{align*}
 \delta(\mo_1,\mo_2)=\inf_{\pi_1}\sup_{\pi_2}\sup_{\theta\in\Theta}\sup_{L:\|L\|=1}\big|R_1(\pi_1,L,\theta)-R_2(\pi_2,L,\theta)\big|.
\end{align*}
Thus $\Delta(\mo_1,\mo_2)<\varepsilon$ means that for every procedure $\pi_i$ in problem $i$ there is a procedure $\pi_j$ in problem $j$, $\{i,j\}=\{1,2\}$, with risks differing by at most $\varepsilon$, uniformly over all bounded $L$ and $\theta\in\Theta$.
In particular, when minimax rates of convergence in a nonparametric estimation problem are obtained in one experiment, the same rates automatically hold in any asymptotically equivalent experiment. There is more: When explicit transformations from one experiment to another are obtained, statistical procedures can be carried over from one experiment to the other one.

There are various techniques to bound the Le Cam distance. We report below only the properties that are useful for our purposes. For the proofs see, e.g., \cite{lecam,strasser}.
\begin{property}\label{delta0}
 Let $\mo_j=(\X,\A,\{P_{j,\theta}; \theta\in\Theta\})$, $j=1,2$, be two statistical models having the same sample space and define 
 $\Delta_0(\mo_1,\mo_2):=\sup_{\theta\in\Theta}\|P_{1,\theta}-P_{2,\theta}\|_{TV}.$
 Then, $\Delta(\mo_1,\mo_2)\leq \Delta_0(\mo_1,\mo_2)$.
\end{property}
In particular, Property \ref{delta0} allows us to bound the Le Cam distance between statistical models sharing the same sample space by means of classical bounds for the total variation distance. To that aim, we collect below some useful results.
\begin{fact}\label{h}
 Let $P_1$ and $P_2$ be two probability measures on $\X$, dominated by a common measure $\xi$, with densities $g_{i}=\frac{dP_{i}}{d\xi}$, $i=1,2$. Define
 \begin{align*}
  L_1(P_1,P_2)&=\int_{\X} |g_{1}(x)-g_{2}(x)|\xi(dx), \\
  H(P_1,P_2)&=\bigg(\int_{\X} \Big(\sqrt{g_{1}(x)}-\sqrt{g_{2}(x)}\Big)^2\xi(dx)\bigg)^{1/2}.
 \end{align*}
Then,
\begin{equation*} 
 \|P_1-P_2\|_{TV}=\frac{1}{2}L_1(P_1,P_2)\leq H(P_1,P_2).
\end{equation*}
\end{fact}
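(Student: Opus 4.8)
The plan is to treat the two assertions separately: first the identity $\|P_1-P_2\|_{TV}=\tfrac12 L_1(P_1,P_2)$, and then the bound $\tfrac12 L_1(P_1,P_2)\le H(P_1,P_2)$.

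For the identity, I would start from the definition $\|P_1-P_2\|_{TV}=\sup_{A\in\mathscr{A}}|P_1(A)-P_2(A)|$ and partition $\mathscr{X}$ according to the sign of $g_1-g_2$. Writing $A^+=\{g_1>g_2\}$, the key observation is that for any measurable $A$ one has $P_1(A)-P_2(A)=\int_A(g_1-g_2)\,d\xi$, an expression maximized by taking $A=A^+$ and minimized over its complement. Since both $g_1$ and $g_2$ integrate to $1$ against $\xi$, the integral of $g_1-g_2$ over all of $\mathscr{X}$ vanishes, so the positive and negative parts of $g_1-g_2$ carry equal mass; each therefore equals $\tfrac12\int|g_1-g_2|\,d\xi$. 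This yields $\sup_A|P_1(A)-P_2(A)|=\int_{A^+}(g_1-g_2)\,d\xi=\tfrac12 L_1(P_1,P_2)$.

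For the inequality, the idea is to factor the integrand as $|g_1-g_2|=|\sqrt{g_1}-\sqrt{g_2}|\,(\sqrt{g_1}+\sqrt{g_2})$ and apply Cauchy--Schwarz with respect to $\xi$:
\begin{equation*}
L_1(P_1,P_2)\le\bigg(\int_{\mathscr{X}}\big(\sqrt{g_1}-\sqrt{g_2}\big)^2 d\xi\bigg)^{1/2}\bigg(\int_{\mathscr{X}}\big(\sqrt{g_1}+\sqrt{g_2}\big)^2 d\xi\bigg)^{1/2}.
\end{equation*}
The first factor is exactly $H(P_1,P_2)$. For the second, expanding the square gives $\int(g_1+g_2)\,d\xi+2\int\sqrt{g_1 g_2}\,d\xi=2+2\int\sqrt{g_1 g_2}\,d\xi$, and since $\int\sqrt{g_1 g_2}\,d\xi\le(\int g_1\,d\xi)^{1/2}(\int g_2\,d\xi)^{1/2}=1$ by a further application of Cauchy--Schwarz, the second factor is bounded by $\sqrt 4=2$. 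Combining, $L_1(P_1,P_2)\le 2H(P_1,P_2)$, which is the claim.

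This is a classical result and I do not anticipate a genuine obstacle; the only points requiring care are bookkeeping. In the first part one must justify that the supremum over all measurable sets is actually attained at $A^+$, rather than merely bounded above by the $L_1$ expression; this rests on the sign decomposition together with the normalization $\int g_i\,d\xi=1$. In the second part, the only nontrivial ingredient is recognizing that the affinity $\int\sqrt{g_1 g_2}\,d\xi$ is at most $1$, which is precisely what keeps the second Cauchy--Schwarz factor below $2$. Neither step uses any special structure of the densities, so the statement holds for arbitrary $P_1,P_2$ dominated by $\xi$.
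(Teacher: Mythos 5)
Your proof is correct and is the classical argument: the sign-set decomposition for the identity $\|P_1-P_2\|_{TV}=\tfrac12 L_1$, and the factorization $|g_1-g_2|=|\sqrt{g_1}-\sqrt{g_2}|(\sqrt{g_1}+\sqrt{g_2})$ followed by Cauchy--Schwarz and the affinity bound $\int\sqrt{g_1g_2}\,d\xi\le 1$ for the inequality. The paper itself states this Fact without proof, deferring to the references on the Le Cam theory, so there is nothing to compare against; your argument is the standard one found there.
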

\begin{fact}\label{hp}
 Let $P$ and $Q$ be two product measures defined on the same sample space: $P=\otimes_{i=1}^n P_i$, $Q=\otimes_{i=1}^n Q_i$. Then
 \begin{equation*}
  H ^2(P,Q)\leq \sum_{i=1}^nH^2(P_i,Q_i).
 \end{equation*}
\end{fact}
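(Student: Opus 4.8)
The plan is to pass to the \emph{Hellinger affinity}, which multiplies over product measures, and then conclude with an elementary inequality. Recall from Fact \ref{h} that $H^2(P,Q)=\int_{\X}\big(\sqrt{g_1}-\sqrt{g_2}\big)^2\,\xi(dx)$ for densities $g_i=\frac{dP_i}{d\xi}$ with respect to a common dominating measure $\xi$. Since $\int g_i\,d\xi=1$, expanding the square yields $H^2(P,Q)=2-2\rho(P,Q)$, where $\rho(P,Q):=\int_{\X}\sqrt{g_1 g_2}\,\xi(dx)$ is the affinity. Thus the claim is equivalent to a lower bound on the affinity of the product, and the whole argument reduces to understanding how $\rho$ behaves under tensorization.

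First I would establish the tensorization of the affinity. For each coordinate pick a measure $\xi_i$ dominating both $P_i$ and $Q_i$ (for instance $\xi_i=P_i+Q_i$), and take $\xi=\bigotimes_{i=1}^n\xi_i$ as a common dominating measure for $P$ and $Q$. The product densities then factorize, $\frac{dP}{d\xi}(x_1,\dots,x_n)=\prod_{i=1}^n\frac{dP_i}{d\xi_i}(x_i)$ and likewise for $Q$, so the integrand $\sqrt{\frac{dP}{d\xi}\frac{dQ}{d\xi}}$ is a product of the per-coordinate integrands. As this integrand is nonnegative, Tonelli's theorem applies and the integral splits, giving the key identity $\rho(P,Q)=\prod_{i=1}^n\rho(P_i,Q_i)$.

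Finally I would convert this product identity into the desired sum bound. Writing $a_i:=\frac{1}{2}H^2(P_i,Q_i)$, one has $a_i\in[0,1]$ (since $\rho\ge 0$ by Cauchy--Schwarz) and $\rho(P_i,Q_i)=1-a_i$, so $H^2(P,Q)=2\big(1-\prod_{i=1}^n(1-a_i)\big)$. The elementary inequality $\prod_{i=1}^n(1-a_i)\ge 1-\sum_{i=1}^n a_i$, valid for all $a_i\in[0,1]$ and proved by a one-line induction on $n$, then yields $H^2(P,Q)\le 2\sum_{i=1}^n a_i=\sum_{i=1}^n H^2(P_i,Q_i)$, which is exactly the assertion.

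There is no genuine obstacle here: the only points requiring care are the choice of a product dominating measure so that the densities factorize cleanly, and the justification of the interchange of product and integral via nonnegativity (Tonelli). The elementary product inequality is the sole remaining ingredient and is entirely standard.
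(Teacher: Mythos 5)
Your proof is correct and is the standard argument: the paper itself gives no proof of this fact (it refers the reader to the literature), and the classical derivation there is exactly yours, namely tensorization of the Hellinger affinity $\rho(P,Q)=\prod_{i=1}^n\rho(P_i,Q_i)$ followed by the elementary bound $\prod_{i=1}^n(1-a_i)\geq 1-\sum_{i=1}^n a_i$ for $a_i\in[0,1]$. All the steps you flag as requiring care (product dominating measure, Tonelli, $\rho\in[0,1]$) are handled correctly.
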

\begin{fact}\label{fact:gaussiane}
Let $Q_1\sim\No(\mu_1,\sigma_1^2)$ and $Q_2\sim\No(\mu_2,\sigma_2^2)$. Then
$$\|Q_1-Q_2\|_{TV}\leq \sqrt{2\bigg(1-\frac{\sigma_1^2}{\sigma_2^2}\bigg)^2+\frac{(\mu_1-\mu_2)^2}{2\sigma_2^2}}.$$
\end{fact}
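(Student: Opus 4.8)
The plan is to route the bound through the Hellinger distance, which for two Gaussians admits a closed-form affinity, and then to split that affinity into a pure-variance and a pure-mean contribution. First I would invoke Fact \ref{h} to reduce the problem to Hellinger: since $\|Q_1-Q_2\|_{TV}\le H(Q_1,Q_2)$, it suffices to show that $H^2(Q_1,Q_2)\le 2(1-\sigma_1^2/\sigma_2^2)^2+(\mu_1-\mu_2)^2/(2\sigma_2^2)$. Writing $p_i$ for the density of $Q_i$ and completing the square in the Gaussian integral $\int\sqrt{p_1p_2}$ gives the affinity
$$\int_{\R}\sqrt{p_1(x)p_2(x)}\,dx=\sqrt{\frac{2\sigma_1\sigma_2}{\sigma_1^2+\sigma_2^2}}\,\exp\Big(-\frac{(\mu_1-\mu_2)^2}{4(\sigma_1^2+\sigma_2^2)}\Big)=:\rho\,e,$$
where $\rho=\sqrt{2\sigma_1\sigma_2/(\sigma_1^2+\sigma_2^2)}$ and $e=\exp(-(\mu_1-\mu_2)^2/(4(\sigma_1^2+\sigma_2^2)))$, so that $H^2(Q_1,Q_2)=2(1-\rho e)$.

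The decisive step is to decompose the defect as $1-\rho e=(1-\rho)+\rho(1-e)$, isolating a variance term and a mean term. For the variance term, set $t=\sigma_1/\sigma_2$. By the arithmetic--geometric mean inequality $\rho\le 1$, hence $1-\rho\le 1-\rho^2=(1-t)^2/(1+t^2)$; and since $(1+t^2)(1+t)^2\ge 1$ for every $t\ge 0$, this is at most $(1-t)^2(1+t)^2=(1-t^2)^2=(1-\sigma_1^2/\sigma_2^2)^2$. For the mean term, the elementary bound $1-e^{-x}\le x$ together with $\rho\le 1$ and $\sigma_1^2+\sigma_2^2\ge\sigma_2^2$ yields $\rho(1-e)\le 1-e\le (\mu_1-\mu_2)^2/(4(\sigma_1^2+\sigma_2^2))\le (\mu_1-\mu_2)^2/(4\sigma_2^2)$.

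Combining the two estimates, $H^2(Q_1,Q_2)=2(1-\rho)+2\rho(1-e)\le 2(1-\sigma_1^2/\sigma_2^2)^2+(\mu_1-\mu_2)^2/(2\sigma_2^2)$, and taking square roots together with $\|Q_1-Q_2\|_{TV}\le H(Q_1,Q_2)$ concludes the proof. I expect the only genuinely computational point to be the evaluation of the Gaussian affinity $\int\sqrt{p_1p_2}$, a standard completion-of-the-square exercise, whereas the splitting $1-\rho e=(1-\rho)+\rho(1-e)$ is the trick that produces the \emph{additive} (hence $\ell_2$-type) form of the bound; the two remaining inequalities are elementary. I would deliberately avoid the alternative route via the triangle inequality through the intermediate law $\No(\mu_1,\sigma_2^2)$ and Pinsker's inequality: that argument produces an $\ell_1$ combination of the mean and variance defects and therefore does not directly reproduce the stated $\sqrt{\,\cdot+\cdot\,}$ form, which is precisely why the Hellinger approach is preferable here.
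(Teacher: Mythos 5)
Your argument is correct: the Gaussian Hellinger affinity formula is right, the decomposition $1-\rho e=(1-\rho)+\rho(1-e)$ together with the elementary bounds $1-\rho\le 1-\rho^2=(1-t)^2/(1+t^2)\le(1-t^2)^2$ and $1-e^{-x}\le x$ does yield exactly $H^2\le 2(1-\sigma_1^2/\sigma_2^2)^2+(\mu_1-\mu_2)^2/(2\sigma_2^2)$, and Fact \ref{h} then gives the stated total-variation bound. The paper states this fact without proof (deferring to standard references), so there is no in-paper argument to compare against; your Hellinger route is a clean, self-contained derivation of precisely the additive form claimed.
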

\begin{fact}\label{fact:processigaussiani}
For $i=1,2$, let $Q_i$, $i=1,2$, be the law on $(C,\Ci)$ of two Gaussian processes of the form
$$X^i_t=\int_{0}^t h_i(s)ds+ \int_0^t \sigma(s)dW_s,\ t\in I$$
where $h_i\in L_2(\R)$ and $\sigma\in\R_{>0}$. Then: 
 $$L_1\big(Q_1,Q_2\big)\leq \sqrt{\int_I\frac{\big(h_1(y)-h_2(y)\big)^2}{\sigma^2(s)}ds}.$$
\end{fact}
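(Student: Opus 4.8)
The plan is to exploit the fact that $Q_1$ and $Q_2$ are laws of two Gaussian diffusions on $(C,\Ci)$ sharing the \emph{same} diffusion coefficient $\sigma(\cdot)$ and differing only through their drifts $h_1,h_2$. If the right-hand side $c:=\int_I\frac{(h_1(s)-h_2(s))^2}{\sigma^2(s)}\,ds$ is infinite there is nothing to prove, so I may assume $c<\infty$; then the Cameron--Martin condition holds and $Q_1$ and $Q_2$ are mutually absolutely continuous. Since Fact \ref{h} already gives $L_1(Q_1,Q_2)=2\|Q_1-Q_2\|_{TV}\le 2H(Q_1,Q_2)$, it suffices to control the Hellinger distance, equivalently the Hellinger affinity $\rho:=\E_{Q_2}\big[\sqrt{dQ_1/dQ_2}\big]$, for which $H^2(Q_1,Q_2)=2(1-\rho)$.

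Under $Q_2$ the canonical process satisfies $dX_s=h_2(s)\,ds+\sigma(s)\,dW_s$ with $W$ a $Q_2$-Brownian motion, so by the Girsanov (Cameron--Martin) formula, writing $\theta(s):=\frac{h_1(s)-h_2(s)}{\sigma(s)}$,
$$\frac{dQ_1}{dQ_2}=\exp\bigg(\int_I\theta(s)\,dW_s-\frac12\int_I\theta^2(s)\,ds\bigg).$$
The crucial simplification is that, drift and diffusion being \emph{deterministic}, $N:=\int_I\theta(s)\,dW_s$ is a centered Gaussian random variable under $Q_2$ with variance $\int_I\theta^2=c$. Hence $\sqrt{dQ_1/dQ_2}=\exp\big(\tfrac12 N-\tfrac14 c\big)$, and computing the Gaussian Laplace transform $\E_{Q_2}[e^{N/2}]=e^{c/8}$ for $N\sim\No(0,c)$ yields $\rho=e^{-c/4}\,e^{c/8}=\exp(-\tfrac18 c)$.

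To conclude I combine these: from $H^2=2(1-\rho)=2\big(1-e^{-c/8}\big)\le 2\cdot\tfrac{c}{8}=\tfrac{c}{4}$, using $1-e^{-x}\le x$, I get $H\le\tfrac12\sqrt c$ and therefore $L_1\le 2H\le\sqrt c=\sqrt{\int_I\frac{(h_1-h_2)^2}{\sigma^2}\,ds}$, which is exactly the claim. The only genuinely delicate point I expect is justifying the change of measure and the exponential-moment computation when $I$ is unbounded and $\sigma$ is a general positive (non-constant) function; here the deterministic Gaussian structure is what rescues the argument, since rather than invoking Novikov's condition the exponential moment is computed exactly (the stochastic integral being Gaussian), and the assumed finiteness of $c$ guarantees this Gaussian has finite variance. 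As an alternative one could compute $D_{KL}(Q_1\|Q_2)=\tfrac12 c$ directly from the same density and apply Pinsker's inequality $\|Q_1-Q_2\|_{TV}\le\sqrt{\tfrac12 D_{KL}(Q_1\|Q_2)}$, giving the identical bound $L_1\le\sqrt{2\cdot\tfrac12 c}=\sqrt c$.
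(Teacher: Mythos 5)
Your proof is correct. The paper does not actually prove this Fact --- it is recalled in the appendix as a classical result with a generic pointer to the literature --- so there is no in-paper argument to compare against. Your route (Cameron--Martin/Girsanov density with deterministic integrand $\theta=(h_1-h_2)/\sigma$, exact computation of the Hellinger affinity $\rho=\E_{Q_2}\big[\sqrt{dQ_1/dQ_2}\big]=e^{-c/8}$ using that the Wiener integral of a deterministic $L^2$ function is Gaussian with variance $c$, then $H^2=2(1-\rho)\le c/4$ and $L_1\le 2H\le\sqrt c$ via Fact \ref{h}) is the standard one, and all the constants check out to give exactly the stated bound; the Pinsker alternative with $D_{KL}(Q_1\Vert Q_2)=c/2$ also lands on $\sqrt c$. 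The only blemishes are inherited from the statement itself rather than from your argument: the paper writes $\sigma\in\R_{>0}$ but then uses $\sigma(s)$ and $\sigma^2(s)$ as a function, and the integrals defining $X^i_t$ start at $0$ while $t$ ranges over a possibly unbounded interval $I$. Your reduction to the case $c<\infty$ (mutual absolute continuity via Cameron--Martin) and your remark that no Novikov-type condition is needed because the exponential moment is computed exactly for a Gaussian variable correctly dispose of the genuinely delicate points.
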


\begin{property}\label{fatto3}
 Let $\mo_i=(\X_i,\A_i,\{P_{i,\theta}, \theta\in\Theta\})$, $i=1,2$, be two statistical models. 
Let $S:\X_1\to\X_2$ be a sufficient statistics
such that the distribution of $S$ under $P_{1,\theta}$ is equal to $P_{2,\theta}$. Then $\Delta(\mo_1,\mo_2)=0$. 
\end{property}

Finally, we recall the following result that allows us to bound the Le Cam distance between multinomial and Gaussian variables. According with the notation used throughout the paper, $\M(n,\theta)$ stands for a multinomial distribution of parameters $(n,\theta)$.
\begin{theorem}\label{teocarter}(See \cite{cmultinomial}, Theorem 1 and Sections 7.1, 7.2)
 Let $\mo=\{P_{\theta}:\theta\in\Theta_R\}$, where $P_\theta=\M(n,\theta)$ and $\Theta_R\subset \R^m$ consists of all vectors of probabilities such that
 $$\frac{\max \theta_i}{\min\theta_i}\leq R.$$
 Let $\mathscr Q=\{Q_\theta:\theta\in\Theta_R\}$ where $Q_\theta$ is the multivariate normal distribution with vector mean $(\sqrt{n\theta_1},\dots,\sqrt{n\theta_m})$ and diagonal covariance matrix $\frac{1}{4}I_m$. Then
 $$\Delta(\mo,\mathscr Q)\leq C_R\frac{m\ln m}{\sqrt n}$$
 for a constant $C_R$ that depends only on $R$.
\end{theorem}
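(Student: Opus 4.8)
The plan is to reduce the $m$-dimensional comparison to a collection of one-dimensional binomial--Gaussian approximations organized along a binary tree, with the square root serving as the variance-stabilizing transformation throughout.

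First I would record the conditional (sequential) structure of the multinomial: writing $(Z_1,\dots,Z_m)\sim\M(n,\theta)$, the vector can be generated by a cascade of binomial splits. Rather than the naive chain $Z_k\mid Z_1,\dots,Z_{k-1}\sim\mathrm{Bin}(n-\sum_{j<k}Z_j,\cdot)$, which degenerates for large $k$, I would group the $m$ cells by recursive bisection into a balanced binary tree whose leaves are the individual cells. Each internal node $\nu$ then carries a single binomial: given the number $N_\nu$ of counts reaching $\nu$, the number passing to its left child is $\mathrm{Bin}(N_\nu,p_\nu)$, with $p_\nu$ the conditional mass of the left block. The balance hypothesis $\max_i\theta_i/\min_i\theta_i\le R$ is exactly what lets me choose the bisection so that every $p_\nu$ stays bounded away from $0$ and $1$ and every $N_\nu$ is, up to constants depending only on $R$, of order $n\,2^{-d(\nu)}$ at depth $d(\nu)$. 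The Gaussian experiment $\mathscr Q$ admits the mirror-image decomposition: a coordinate vector of independent $\No(\sqrt{n\theta_i},\tfrac14)$ variables is, after passing to block sums, generated by an analogous tree of Gaussian splits, so the two experiments are placed in exact structural correspondence.

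The heart of the argument is then a single per-node lemma: the law of one binomial split, after the square-root transform, is close in Le Cam distance to the corresponding Gaussian split of variance $\tfrac14$. Here I would construct Markov kernels in \emph{both} directions --- forward by adding a small independent smoothing noise to $\sqrt{Z}$, backward by discretizing (rounding with a continuity correction) the Gaussian onto the binomial lattice. The square root is indispensable: it removes to leading order the dependence of the variance on $p$, so that the fixed variance $\tfrac14$ suffices, and the residual discrepancy is governed by the skewness, i.e. by a third-order Edgeworth / local-CLT correction of size controlled by a negative power of $N_\nu$. I expect this per-binomial bound, together with the control of the lattice-rounding in the reverse (Gaussian $\to$ multinomial) direction over a range of width $\sim\sqrt{N_\nu\ln N_\nu}$, to be the main obstacle, since it is the only place where the discreteness and the non-Gaussian tails must be confronted quantitatively.

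Finally I would assemble the global bound by propagating the per-node errors along the tree. Because the construction is sequential, the deficiencies compose and the total Le Cam distance is controlled by a suitable sum of the per-node contributions; the balance condition keeps each node's size and split probability in the favourable range, and the bookkeeping across the $\lceil\log_2 m\rceil$ levels, with $O(m)$ nodes in total, yields the stated rate $C_R\,\frac{m\ln m}{\sqrt n}$, with $C_R$ depending only on $R$. The two directions of the kernel construction give the symmetric $\Delta$-bound, completing the proof.
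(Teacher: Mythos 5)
First, a point of context: the paper does not prove this statement at all --- Theorem~\ref{teocarter} is imported verbatim from Carter \cite{cmultinomial} (his Theorem~1 combined with the variance-stabilization step of his Sections~7.1--7.2), and is used here as a black box in Step~3 of the proof of Theorem~\ref{teo1}. So the only meaningful comparison is with Carter's original argument. Your outline does in fact reproduce its architecture: the recursive dyadic splitting of the multinomial into a binary tree of conditional binomials, the mirror-image decomposition of the Gaussian experiment, the square root as variance-stabilizing transformation, smoothing/rounding kernels in the two directions, and additive accumulation of per-node errors along the $\log_2 m$ levels. On that structural level you have rediscovered the right proof.

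The genuine gap is quantitative, and it sits exactly where you say you ``expect the main obstacle'' to be: you never state the per-node rate, and without it the final bookkeeping does not close. The natural total-variation distance between a (smoothed) $\mathrm{Bin}(N_\nu,p_\nu)$ and its approximating normal is of order $N_\nu^{-1/2}$ (the Berry--Esseen/skewness term is genuinely present); with $N_\nu\asymp n2^{-d}$ and $2^d$ nodes at depth $d$, additive accumulation gives $\sum_{d\le\log_2 m}2^d\sqrt{2^d/n}\asymp m^{3/2}/\sqrt n$, which is strictly worse than the claimed $m\ln m/\sqrt n$. The entire content of Carter's proof is to beat the $N_\nu^{-1/2}$ rate per node: the square-root transform is used not merely to stabilize the variance but to cancel the leading (skewness) term of the Edgeworth expansion, and the residual error must then be controlled on a high-probability set of typical counts (this restriction, via a union bound over the cells, is where the $\ln m$ factor enters), with the passage from the covariance $nV_\theta$ to the diagonal $\tfrac14 I_m$ handled as a separate Taylor-expansion step. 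Your sketch gestures at ``a third-order Edgeworth correction of size controlled by a negative power of $N_\nu$'' but leaves that power unspecified; since the theorem's rate is exactly determined by it, the proposal as written is an accurate table of contents for Carter's proof rather than a proof.
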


\subsection*{Acknowledgements}

I would like to thank my Ph.D supervisor, Sana Louhichi, for several fruitful discussions. I am also very grateful to Markus Reiss for some very insightful exchanges from which the main idea behind this paper emerged.

\printbibliography
\end{document}